\newcommand{\AR}[1]{{\bf [~AR:\ } {\em \textcolor{blue}{#1}\bf~]}}
\newcommand{\EM}[1]{{\bf [~EM:\ } {\em \textcolor{magenta}{#1}\bf~]}}
\newcommand{\ecc}{\textrm{ecc}}
\newcommand{\dist}{\textrm{dist}}
\newcommand{\diam}{\textrm{diam}}
\newtheorem{theorem}{Theorem}[section]
\newtheorem{corollary}[theorem]{Corollary}
\newtheorem{definition}[theorem]{Definition}
\newtheorem{lemma}[theorem]{Lemma}
\newtheorem{proposition}[theorem]{Proposition}
\newtheorem{remark}[theorem]{Remark}
\title{The Explorer-Director Game on Graphs}
\author{Pat Devlin\footnote{Department of Mathematics, Yale University, patrick.devlin@yale.edu}, Erin Meger\footnote{Laboratoire d'alg\`ebre, du combinatoire et d'informatique math\'ematique, Universit\'e du Qu\'ebec \`a Montr\'eal, mckenna\_meger.erin\_kathleen@courrier.uqam.ca}, Abigail Raz\footnote{Department of Mathematics, University of Nebraska, Lincoln, araz2@unl.edu}, Polymath REU Participants\footnote{Ben Bates, Tyler Beauregard, Daniel Brey, Elaine Danielson, Mayur Lahane, Angela Li, Mohamed Lotfi, Stephanie Marsh, Oskar Mathiasen, Audrey McMillion, Dhruva Mehta, Joseph Mescall, Jonah Nan, Jarod Palubiski, Cac Phan, Isabel Sacksteder, Danielle Solomon, João Tavares, Lincoln Updike, Shannon Vogel, James Wash, Charles Weng, Erica Zhang, Mengxue Zhang}}
\begin{document}
\maketitle

\begin{abstract}
    The Explorer-Director game, first introduced by Nedev and Muthukrishnan, can be described as a game where two players---Explorer and Director---determine the movement of a token on the vertices of a graph. At each time step, the Explorer specifies a distance that the token must move hoping to maximize the amount of vertices ultimately visited, and the Director adversarially chooses where to move token in an effort to minimize this number.  Given a graph and a starting vertex, the number of vertices that are visited under optimal play is denoted by $f_d(G,v)$.

    In this paper, we first reduce the study of $f_d (G,v)$ to the determination of the minimal sets of vertices that are \textit{closed} in a certain combinatorial sense, thus providing a structural understanding of each player's optimal strategies.  As an application, we address the problem on lattices and trees. In the case of trees, we also provide a complete solution even in the more restrictive setting where the strategy used by the Explorer is not allowed to depend on their opponent's responses.  In addition to this paper, a supplementary companion note will be posted to arXiv providing additional results about the game in a variety of specific graph families.

    
\end{abstract}


\section{Introduction}

Consider a game with two players---an Explorer and a Director---played on a graph $G$. The game is played by moving a token between the vertices of $G$ in the following manner: on any given turn of the game, the Explorer selects a distance to move the token, and the Director goes on to move the token to a vertex that is that distance away from the token's present position. The goal of the Explorer is to maximize the number of vertices the token visits, while the Director's goal is to minimize that same number.The game generally ends when the Explorer cannot effectively call any sequence of distances that would force a visit to an unvisited vertex.

First introduced by Nedev and Muthukrishnan as the Magnus-Derek Game, this game is intended to simulate the behavior of a \emph{Mobile Agent (MA)} exploring a ring network where there is inconsistent global sense of direction \cite{NM}. Through focusing on the specific case in which the game is played on a circular table with $n$ positions labelled consecutively, Nedev and Muthukrishnan had established that the number of vertices of this cycle visited assuming optimal play, which they denoted $f^*(n)$, would be: \[f^*(n)=\begin{cases}
n & \text{if } n=2^k \text{ for some } k>0 \\
\frac{n(p-1)}{p} & \text{if } n=pm, \text{ where } p \text{ is the smallest odd prime factor of } n \text{ and } p>2
\end{cases} \]

 Follow-up research on the game, such as the ones done by Hurkens et al., Nedev, and Chen et al., has advanced alternative algorithms that achieve $f^*(n)$ more quickly with reduced computational complexity \cite{HPW, N10, N14, CLST}. Chen et al. have also developed strategies for other game variants, while others like Gerbner brought about a new angle by introducing an algebraic generalization for the original game, in which positions $g$ $\in$ $G$ are seen as elements of some cyclic group, $G$ \cite{CLST, G}.  Asgeirsson and Devlin further built on this idea \cite{AD}.  Through relating the game to combinatorial group theory and the notion of \emph{twisted subgroup}, they expanded the scope of the game beyond cyclic groups to include other general finite groups, and named the game as the Explorer-Director Game -- the generalized version of the original game which we will be focusing on for our research \cite{AD}.

In this paper, we make progress by developing strategies in various types of graphs, introducing new variants of the game, and constructing alternative algorithms which determine game completion and the minimal solution.

In a graph $G$, the distance between a pair of vertices $u,v \in V(G)$ to be the length of the shortest path between them. For the explorer director game on a graph $G$ with starting vertex $v$, we define the explorer-director number, $f_d(G,v)$, as the  number of visited vertices at the end of the game. 


We begin in Section~\ref{closed}, where we introduce the notion of a \emph{closed set} which in turn implies generic bounds on $f_d(G,v)$. We follow this with Section~\ref{graphs} where we provide specific bounds for lattices and trees. Our final results end with Section~\ref{nonadaptive} where we consider a nonadaptive strategy.  In a subsequent arXiv companion, we include additional results for specific graphs including a full solution for triangular lattices. For ease of reading, these results have been separated from the main results presented here. The results in the arXiv companion allow insight into the game by highlighting important strategies used. We have included the most useful and novel techniques and game strategies in this paper.

An interesting result that shows why we care to look at classes of graphs. These values are not bounded based on subgraphs or vertex numbers; so different classes of graphs could give bounds on larger classes.

\begin{corollary}
For any $b \geq 2$ and $2 \leq k \leq b$, there exists a connected graph $G$ on $b$ vertices such that $f_d(G)=k$.
\end{corollary}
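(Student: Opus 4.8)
The plan is to produce, for every admissible pair $(b,k)$, a single tunable graph — a clique with a pendant path (a \emph{lollipop graph}). For $b\ge 2$ and $2\le k\le b$, let $G$ be obtained from the complete graph $K_{b-k+2}$ on vertex set $\{v_0,u_1,\dots,u_{b-k+1}\}$ by attaching at $v_0$ a new path $v_0-w_1-w_2-\cdots-w_{k-2}$ on $k-2$ fresh vertices; when $k=2$ this degenerates to $G=K_b$. Then $G$ is connected and $|V(G)|=(b-k+2)+(k-2)=b$, so everything reduces to computing $f_d(G)$. For this I would use the closed-set reduction developed in Section~\ref{closed}: the value $f_d(G,v)$ equals the minimum size of a closed set containing $v$ (the Director can hold the token inside any closed set forever, while the Explorer can force a visit to every vertex of a minimal one), and the construction will make this minimum equal to $k$ for \emph{every} starting vertex, so the vertex-free quantity $f_d(G)$ is unambiguous.

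For the upper bound I would check that $S_0:=\{v_0,w_1,\dots,w_{k-2},u_1\}$ — for $k=2$ just an edge of $K_b$ — is closed, giving $|S_0|=k$ and hence $f_d(G,v)\le k$ for every $v\in S_0$; by the symmetry of the vertices $u_j$, every vertex of $G$ lies in a closed set of exactly this shape, so in fact $f_d(G,v)\le k$ for all $v$. Verifying closedness of $S_0$ is a short interval computation: from a path vertex $w_i$ the realizable distances form the interval $\{1,2,\dots,\max(i+1,\,(k-2)-i)\}$, and each value is already realized inside $S_0$ by stepping back along $w_{i-1},\dots,w_1,v_0$, by reaching $u_1$ at distance $i+1$, or by stepping forward along $w_{i+1},\dots,w_{k-2}$; from $v_0$ the realizable distances are $\{1,\dots,k-2\}$, realized by $u_1$ and the $w_i$; and from $u_1$ they are $\{1,\dots,k-1\}$, realized by $v_0$ and the $w_i$.

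For the matching lower bound, the cleanest vertex to analyze is the free endpoint $w_{k-2}$: in $G$ it has a \emph{unique} vertex at each distance $1,2,\dots,k-2$, namely $w_{k-3},w_{k-4},\dots,w_1,v_0$ respectively, so any closed set containing $w_{k-2}$ must contain all of these; moreover the distance $k-1$ is realizable from $w_{k-2}$ (to any $u_j$) but is attained \emph{only} by clique vertices, forcing at least one $u_j$ into the set. That already exhibits $k$ distinct vertices, so $f_d(G,w_{k-2})\ge k$, hence $=k$; combined with the previous paragraph this proves the corollary. To upgrade the lower bound to every starting vertex I would observe that starting elsewhere the forcing chain first drags $w_{k-2}$ into the closed set — from a clique vertex because $w_{k-2}$ is its unique eccentric vertex, and from an interior path vertex $w_i$ either because $w_{k-2}$ is its unique eccentric vertex or because $w_i$ first forces a clique vertex whose unique eccentric vertex is $w_{k-2}$ — after which the argument above applies. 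I expect the real obstacle to be bookkeeping rather than conceptual: keeping the degenerate regimes $k=2$ (where $G=K_b$) and $b=k$ (where $G$ is a path and $S_0=V(G)$) straight, and carefully checking the uniqueness of the eccentric-type vertices used in each forcing step, especially for interior path vertices near the middle of the path.
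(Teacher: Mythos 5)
Your construction is exactly the one the paper uses---a lollipop graph formed from $K_{b-k+2}$ with a pendant path on $k-2$ vertices---and your closed-set verification (the size-$k$ set $S_0$ is closed, while any closed set is forced to contain the whole path, $v_0$, and a clique vertex) correctly supplies the details the paper leaves entirely unchecked. The only care still needed is the bookkeeping you already flag: in the forcing chain the eccentric fiber of $v_0$ when $k=3$, and of an interior path vertex $w_i$, can contain both clique vertices and another path vertex, so reaching $w_{k-2}$ may take more than the one intermediate step you describe, but the chain still terminates and the conclusion stands.
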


For the simple existence of this, consider a lollipop graph with an $b-k+2$ complete graph and an $k-2$-path graph. There are a number of constructions that give this result.

\section{Closed Subsets}\label{closed}

In this section, we investigate the structure of the set of visited vertices at the end of an optimally played game.

\begin{definition} 
On a graph $G=(V,E)$, we define a non-empty subset $U \subseteq V$ to be \emph{closed} when every vertex $u \in U$ has the property that for every vertex $v \in V$, there exists some vertex $x \in U$ such that $\dist(u,v) = \dist(u,x)$.
\end{definition}

Our notion of closed sets is very similar to another notion of ``distance closed" sets (see \cite{janakiraman2012structural}), where the distances are calculated only using the induced subgraph on $U$. We allow all distances to be calculated using the ambient graph.

In the Explorer Director game, whenever the token is on a vertex of a closed set, regardless of the distance called by the Explorer, the Director can always find a vertex in the closed set to visit. Notice that closed sets exist in every graph, by using the trivial closed set $U=V$, and by definition the empty set is not closed.

\begin{theorem}
For any graph $G$ we have $\min_{v \in V}f_d(G,v)$ is the cardinality of the minimal closed set in $G$.
\end{theorem}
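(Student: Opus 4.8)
The plan is to prove the statement by establishing two inequalities: that the minimum over starting vertices of $f_d(G,v)$ is at most the size of a smallest closed set, and that it is at least that size.

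\textbf{Upper bound.} Let $U$ be a closed set of minimum cardinality, and let $v \in U$ be an arbitrary vertex of it. I claim that if the Director plays to keep the token inside $U$ forever, then at most $|U|$ vertices are ever visited, which gives $f_d(G,v) \le |U|$. The key observation is the one already highlighted in the text: whenever the token sits on some $u \in U$ and the Explorer calls a distance $d$, by the definition of a closed set there is guaranteed to exist $x \in U$ with $\dist(u,x) = d$ (apply the definition with $v$ ranging over all of $V$, so that every realizable distance from $u$ is realized within $U$ — I should double-check that every distance the Explorer can legally call from $u$ equals $\dist(u,v)$ for some $v\in V$, which is immediate). So the Director always has a legal move staying in $U$, and therefore the set of visited vertices is contained in $U$. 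Since this holds starting from $v \in U$, we get $\min_{v} f_d(G,v) \le |U|$.

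\textbf{Lower bound.} Here I want to show that for \emph{every} starting vertex $v$, $f_d(G,v) \ge |U_{\min}|$ where $U_{\min}$ is a minimum closed set — equivalently, that the set $S$ of vertices visited at the end of any optimally-played game (with the Director playing optimally) contains a closed set, hence $|S| \ge |U_{\min}|$. The natural approach is: run the game to its end, let $S$ be the visited set, and argue $S$ is closed. Suppose not — then there is some $u \in S$ and some $w \in V$ such that no $x \in S$ has $\dist(u,x) = \dist(u,w)$; call that distance $d$. The idea is that once the game has ended, the Explorer must be able to force the token back to $u$ (this is essentially the eccentricity-lemma reasoning sketched in the commented-out section: at the end of the game the Explorer can force returns to previously visited vertices arbitrarily often). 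Having forced the token to $u$, the Explorer calls distance $d$; since the Director must move to some vertex at distance exactly $d$ from $u$, and no already-visited vertex lies at that distance, the token is driven to a new vertex — contradicting that the game had ended. Hence $S$ is closed and $|S| \ge |U_{\min}|$.

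\textbf{Main obstacle.} The delicate part is the lower bound, specifically making rigorous the claim that ``at the end of the game the Explorer can force the token back to any already-visited vertex $u$.'' One has to set up carefully what ``the game has ended'' means (the Explorer has no strategy forcing a new visit), and show this implies a strong recurrence property: from the current position the Explorer can navigate the token to $u$ without the Director being able to escape to a new vertex en route. I would formalize this by considering, for the terminal configuration, the set $R$ of positions the token can be forced into while visiting only vertices of $S$; one shows $R$ must be all of $S$ (if some visited vertex were unreachable in this restricted sense, a careful argument about the last time it was visited yields a new-vertex-forcing strategy, contradiction), and in particular $u \in R$. Once that recurrence fact is in hand, the contradiction above closes the argument. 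A secondary check is verifying the two bounds actually combine to an equality at the \emph{same} vertex: the upper bound exhibits a specific good starting vertex (any vertex of $U_{\min}$), while the lower bound holds for all starting vertices, so $\min_v f_d(G,v) = |U_{\min}|$ follows cleanly.
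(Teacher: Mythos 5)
Your upper bound is exactly the paper's argument and is fine. The gap is in the lower bound: you set out to prove that the terminal visited set $S$ is itself closed, via the claim that ``at the end of the game the Explorer can force the token back to any already-visited vertex $u$.'' Both the target statement and the recurrence claim are false in general. Consider $C_6$ with vertices $0,\dots,5$: the set $\{0,1,3,4\}$ is closed, so once the token is in it the Director can stay there forever; but a game in which the Director happened to pass through vertex $2$ early on can still terminate with visited set $S=\{0,1,2,3,4\}$, which is not closed (no visited vertex is at distance $3$ from $2$), and from the terminal configuration the Explorer has no way to force the token back onto $2$. Your proposed repair --- defining $R$ as the set of positions the Explorer can force while staying in $S$ and arguing $R=S$ --- fails for the same reason: in this example $2\in S\setminus R$. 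This is precisely why the paper proves only the weaker statement that $S$ \emph{contains} a closed subset, which still suffices since any closed subset has size at least that of a minimum closed set.

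The paper's mechanism for that weaker statement is an iterative peeling argument you would need to adopt (or reinvent): let $X_1$ be the set of $u\in S$ witnessing non-closedness (some distance from $u$ is realized in $V$ but not in $S$), set $S_2=S\setminus X_1$, and repeat, obtaining $S_{i+1}=S_i\setminus X_i$. One shows by induction that if the token sits on a vertex of $\bigcup_{i\le n}X_i$ then the Explorer can force it out of $S$ entirely within $n$ moves (one call pushes it out of $S_{n}$, i.e., into some earlier $X_j$, and induction finishes). If no $S_n$ were ever nonempty and closed, the process would exhaust $S$, so the Explorer could force a new vertex from the terminal position --- contradicting that the game has ended. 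Hence some nonempty closed $S_n\subseteq S$ exists and $|S|\ge|S_n|\ge|U_{\min}|$. Your instinct that the difficulty lies in formalizing ``the game has ended'' is correct; the resolution is just to lower the target from ``$S$ is closed'' to ``$S$ contains a closed set.''
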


As the Director can always keep the token within a closed set we know that $f_d(G,v)$ is at most the cardinality of the minimal closed set containing $v$. The lower bound is implied by the following more general result.




\begin{theorem}\label{closedthm}
At the end of the Explorer-Director game, the set of all visited vertices, $U$, always contains a closed subset.
\end{theorem}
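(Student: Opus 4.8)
The plan is to produce an explicit closed subset of $U$ by an iterative pruning argument, and then to show that the game having ended is exactly what prevents this pruning from deleting all of $U$. I will use throughout the elementary fact that in a connected graph the set of distances realizable from a vertex $w$ is precisely $\{0,1,\dots,\ecc(w)\}$; accordingly, a set $W$ containing $w$ fails to be closed ``at $w$'' exactly when some value $d \le \ecc(w)$ equals $\dist(w,x)$ for no $x \in W$, in which case I call $w$ \emph{deficient for $W$}.

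First I would define a nested sequence of sets by $U^{(0)} = U$ and $U^{(i+1)} = \{w \in U^{(i)} : w \text{ is not deficient for } U^{(i)}\}$. Since these are finite and decreasing, the sequence stabilizes at a set $U^{\ast}$, and the moment $U^{\ast}$ is nonempty it is closed, since every one of its vertices then realizes every distance up to its eccentricity using only vertices of $U^{\ast}$. So the entire theorem reduces to the claim $U^{\ast} \neq \emptyset$.

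Next I would show, by induction on the \emph{rank} of a vertex $w \in U \setminus U^{\ast}$ (the least $i$ with $w \notin U^{(i+1)}$), that if the token is on such a $w$ then the Explorer can force the token off $U$ entirely --- which, for a game in which the set of visited vertices is exactly $U$, means forcing a visit to a previously unvisited vertex. In the base case $w$ is deficient for $U$ itself, so the Explorer names the missing distance $d$ and every vertex at distance $d$ from $w$ lies outside $U$, forcing the Director out. In the inductive step $w$ is deficient for $U^{(i)}$; the Explorer names the distance $d \le \ecc(w)$ missing from $U^{(i)}$, and the Director either leaves $U$ (and we are done) or is driven to some $x \in U$; but $x \notin U^{(i)}$ by choice of $d$, so $x$ was pruned at an earlier stage and has strictly smaller rank, so the induction hypothesis applies to $x$.

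Finally I would finish by contradiction: assuming $U^{\ast} = \emptyset$, the token --- which always occupies a visited vertex --- sits on a vertex of $U = U \setminus U^{\ast}$, so the previous paragraph hands the Explorer a strategy that still forces a new visit, contradicting that the game has ended. Thus $U^{\ast}$ is a nonempty closed subset of $U$. The step I expect to require the most care is the well-foundedness of the induction in the third paragraph: one must check that a Director response keeping the token inside $U$ necessarily lands in a vertex pruned strictly earlier than $w$, which is precisely what the definition of $U^{(i+1)}$ encodes. A second point worth stating carefully is what ``the end of the game'' means --- since $G$ is finite only finitely many vertices are ever visited, so the set of visited vertices does stabilize, and at that moment the Explorer has no strategy forcing a new visit from the current position, which is the hypothesis the contradiction consumes.
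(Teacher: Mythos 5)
Your proposal is correct and follows essentially the same route as the paper's proof: the same iterative pruning of the vertices that obstruct closure, the same induction on the stage at which a vertex is removed to show the Explorer can force the token out of $U$, and the same concluding contradiction with the assumption that the game has ended. The only cosmetic difference is your phrasing of deficiency in terms of missing distances $d \le \ecc(w)$ rather than missing target vertices $v \in V$, which is equivalent on a connected graph.
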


\begin{proof} 
Let $U$ be the set of visited vertices at the end of the Explorer-Director game on a graph $G(V,E)$. Let $X$ be the set of all vertices $u \in U$ such that there is some vertex $v\in V$ such that for every vertex $w \in U$ we have $\dist(u,v)\neq \dist(u,w)$. Thus $X= \emptyset$ if and only if $U$ is closed. Setting $U = U_1$ and $X= X_1$ let $U_{i+1}=U_i \setminus X_i$ for all $i \ge 1$ (where $X_i$ is the set of all vertices $u \in U_i$ such that there is some vertex $v\in V$ such that for every vertex $w \in U_i$ we have $\dist(u,v)\neq \dist(u,w)$).

We claim that if $v \in \bigcup_{i \le n} X_i$ and the token is on vertex $v$ then the Explorer can force the token out of $U$ in at most $n$ steps. Clearly, this claim holds for $n=1$ by the definition of $X$. We will assume, for the sake of induction, that the statement holds for all vertices in $\bigcup_{i \le k} X_i$. Assume the token is on some vertex $v\in X_{k+1}$. Note that by definition of $X_{k+1}$ the Explorer can force the token out of $U_{k+1}$ in one move. Additionally, as $U \setminus U_{k+1}= \bigcup_{i \le k}X_i$ our inductive hypothesis implies that the Explorer can force the token out of $U$ in at most $n+1$ steps, as desired.

Now we claim that $U_n$ is closed for some $n$, which implies our result as $U_n \subseteq U$ for all $n$. If this is not the case then $U_n = \emptyset$ for some $n$. However, this then implies that $U = \bigcup_{i \le n}X_i$, so by our previous paragraph if the token is in any vertex in $U$ then the Explorer can force the token out of $U$ in at most $n+1$ steps. This contradicts our assumption that $U$ is the set of visited vertices at the end of a game. Thus there must be some $U_n \subseteq U$ which is closed, as desired.

\end{proof}

Theorem \ref{closedthm} has the following nice corollary which provides a lower bound on $f_d(G)$.

\begin{corollary}\label{ecc}
If in a game there exists a set of vertices $A \subseteq V$ which the Explorer can force the Director to visit at least once, then $f_d(G) \geq \min_{v \in A}\ecc(v) + 1$.
\end{corollary}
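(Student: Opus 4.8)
The plan is to combine Theorem~\ref{closedthm} with a direct analysis of what a closed set must look like once it meets $A$. Suppose the game has ended, and let $U$ be the set of visited vertices. By Theorem~\ref{closedthm}, $U$ contains a closed subset $W$. First I would argue that $W$ must intersect $A$: since the Explorer can force the Director to visit a vertex of $A$ regardless of the current position, in particular the Explorer can do this starting from any vertex of $W$; but if $W$ is closed, the Director can keep the token inside $W$ forever, so the vertex of $A$ that gets visited must actually lie in $W$. (One subtlety to flag: "force the Director to visit a vertex of $A$" must be read as holding from every starting configuration, including configurations where some vertices are already marked visited; the statement of the corollary should be understood this way, and this is exactly how the phrase is used in the body of the argument for closed sets.)

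Next I would fix a vertex $v \in W \cap A$ and count. Because $W$ is closed, for every distance $d$ with $1 \le d \le \ecc(v)$ — i.e., every distance that is actually realized from $v$ in $G$ — there exists some $x \in W$ with $\dist(v,x) = d$. These vertices are pairwise distinct for distinct values of $d$, and all of them are distinct from $v$ itself. Hence $|W| \ge \ecc(v) + 1$. Since $v \in A$, we get $|W| \ge \min_{u \in A}\ecc(u) + 1$, and therefore $f_d(G) \ge |U| \ge |W| \ge \min_{u \in A}\ecc(u) + 1$, which is the claim.

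The main obstacle, such as it is, is the first step: making precise the sense in which "the Explorer can force a visit to $A$" interacts with "the Director can stay inside the closed set $W$." The cleanest way I see is to note that closedness of $W$ gives the Director a strategy that keeps the token in $W$ against \emph{any} Explorer play, whereas the hypothesis gives the Explorer a strategy that reaches $A$ against \emph{any} Director play; running these two strategies against each other yields a play in which the token both stays in $W$ and eventually lands in $A$, forcing $W \cap A \neq \emptyset$. Everything after that is the elementary eccentricity count above, so I would keep that part terse.
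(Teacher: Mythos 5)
Your proof follows the same route as the paper's one-line justification: invoke Theorem~\ref{closedthm}, locate a vertex $v \in A$ inside the resulting closed subset, and then use closedness to count one vertex at each distance $1,\dots,\ecc(v)$ from $v$, giving $\ecc(v)+1$ vertices in total. The only difference is that you explicitly justify why the closed subset $W$ must meet $A$ (by playing the Explorer's forcing strategy against the Director's stay-in-$W$ strategy), a step the paper leaves implicit; this is a worthwhile addition of rigor rather than a different approach.
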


This simply follows from Theorem \ref{closedthm} and the fact that for any vertex $v$ any minimal closed set containing $v$ must have size at least $\ecc(v)+1$. 



\section{Specific Graphs}\label{graphs}

The problem of the Explorer-Director game was originally defined on an additive group modulo $n$, which is isomorphic to playing the game on a cycle with $n$ vertices \cite{NM}. Here we further explore additional graph families. Additional results on other graph families can be found in the arXiv companion. 

\subsection{Lattices} 





Let $L_{n,m}$ denote an $n$ by $m$ rectangular lattice with $n$ vertices in each row and $m$ vertices in each column, so that $|V(L_{n,m})|=nm$. Since $L_{n,m}$ is isomorphic to $L_{m,n}$ we will always assume $n \ge m$. We will specify a vertex $v$ by $(x, y)$ to indicate its position in the lattice when arranged in the usual way in the plane. That is, the bottom-left corner will be $(1, 1)$, the top-left corner will be $(n, 1)$, and the top-right corner will be $(n, m)$. 

\begin{theorem}
For the graph $L_{n,m}$ with $n,m\ge 2$ if one of $n$ or $m$ is odd, then for any starting vertex $v$,

\[f_d(L_{n,m}, v) = n+m - 1\]

Otherwise, we have  \[n+m \leq f_d(L_{n,m}, v) \leq 2m+n-2\]
\end{theorem}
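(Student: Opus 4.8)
The plan is to push everything through the closed-set machinery of Section~\ref{closed}: an upper bound $f_d(L_{n,m},v)\le N$ follows from exhibiting a closed set of size $N$ that contains $v$ (the Director confines the token to it), while a lower bound follows from Corollary~\ref{ecc} together with Theorem~\ref{closedthm} applied to a structural description of the smallest closed sets. I will use throughout that distance in $L_{n,m}$ is the Manhattan distance, that $\ecc((x,y))=\max(x-1,n-x)+\max(y-1,m-y)$ (so $\diam(L_{n,m})=n+m-2$ and the farthest vertices from any point are corners), and that along a geodesic $P=v_0v_1\cdots v_L$ one has $\dist(v_i,v_j)=|i-j|$, so the distances realized from $v_i$ to $P$ are exactly $\{0,1,\dots,\max(i,L-i)\}$. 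The general lower bound $f_d\ge n+m-1$ is then immediate: from whatever vertex $w$ the token sits on, the Explorer can call the distance $\ecc(w)$, forcing the Director to move to a farthest vertex of $w$, which is a corner; hence the set $A$ of the four corners can be forced to be visited, and Corollary~\ref{ecc} gives $f_d(L_{n,m},v)\ge\min_{c\in A}\ecc(c)+1=n+m-1$.

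For the odd case it remains to produce, for each $v=(x_v,y_v)$, a closed set of size exactly $n+m-1$ containing $v$, and I claim a suitable monotone corner-to-corner lattice path works. Every vertex has one of $(1,1),(n,m)$ or one of $(1,m),(n,1)$ as a farthest vertex; say the former holds for $v$ (the other case is symmetric), and say $n$ is odd (if instead $m$ is odd, use the middle column in place of the middle row below). Route a monotone path $P$ from $(1,1)$ to $(n,m)$ through $v$ that keeps its second coordinate ``small'' until it reaches the middle row $x=\tfrac{n+1}{2}$, then keeps it ``large'' afterwards; since every vertex of the middle row has both a $(1,\cdot)$- and an $(n,\cdot)$-corner among its farthest vertices, every vertex of $P$ then has $(1,1)$ or $(n,m)$ as a farthest vertex. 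Hence $P$ is a geodesic with $L=n+m-2$ and $|P|=n+m-1$, and for each $v_i\in P$ the realized distances $\{0,\dots,\max(i,L-i)\}$ equal $\{0,\dots,\ecc(v_i)\}$; so $P$ is closed and $f_d(L_{n,m},v)=n+m-1$.

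In the even--even case, for the upper bound I will verify that the set $U=\{(1,y):1\le y\le m\}\cup\{(n,y):1\le y\le m\}\cup\{(x,y_v):1\le x\le n\}$ — the two extreme short lines together with the long line through $v$ — is closed; it has $2m+n-2$ vertices and contains $v$, giving $f_d(L_{n,m},v)\le 2m+n-2$. This is a finite case check that from each vertex of $U$ the three pieces jointly realize every distance up to its eccentricity, the one nontrivial inequality being $|y-y_v|\le\max(y-1,m-y)$ for all $y$ (the right side being the maximum of the left over all $y_v$). For the lower bound I must rule out a closed $U$ with $|U|=n+m-1$. First, $U$ must contain a corner: a vertex of $U$ whose first coordinate is $1$ or $n$ exists (the farthest-vertex requirement forces such a vertex into $U$), and its farthest vertex is then a corner lying in $U$. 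Reflecting so that $(1,1)\in U$, the identity $|U|=\ecc((1,1))+1$ forces $U$ to contain exactly one vertex $u_k=(x_k,y_k)$ with $x_k+y_k=k+2$ for each $k=0,\dots,n+m-2$; in particular $(n,m)\in U$, and the central anti-diagonal $x+y=\tfrac{n+m}{2}+1$ — every vertex of which has $(1,m)$ or $(n,1)$ as a farthest vertex — forces one of $(1,m),(n,1)$ into $U$. The closedness condition at that corner then says $\{\,2x_k-k:0\le k\le n+m-2\,\}$ must be exactly the integer interval $[\,3-m,\ n+1\,]$; but $2x_k-k\equiv k\pmod 2$, and when $n,m$ are both even there are $\tfrac{n+m}{2}$ even indices $k$ yet only $\tfrac{n+m}{2}-1$ even integers in that interval, a contradiction. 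Hence every closed set has at least $n+m$ vertices, so by Theorem~\ref{closedthm} the game visits at least $n+m$ of them.

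The explicit constructions (the odd-case geodesic, the even--even ``U-shape'') and the verifications that they are closed are routine bookkeeping; the genuine work is the $n+m$ lower bound in the even--even case, where one must extract enough rigidity from $|U|=n+m-1$ to reduce $U$ to a one-vertex-per-anti-diagonal configuration, locate the forced corners, and only then uncover the parity obstruction — and, importantly, carry this out without secretly assuming $n=m$.
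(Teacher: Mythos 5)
Your argument is correct, and while your upper bounds and your treatment of the odd case follow the paper's route (the corner-forcing lower bound via Corollary~\ref{ecc}, a corner-to-corner geodesic through the middle row as a closed set of size $n+m-1$, and the same two-short-sides-plus-one-long-line closed set of size $2m+n-2$), your lower bound of $n+m$ in the even--even case is genuinely different. The paper argues strategically: it asserts that the four central vertices let the Explorer force visits to all four corners, picks two non-antipodal corners $u,w$, and observes that the even-distance spheres $N_0(u),N_2(u),\dots$ and $N_0(w),N_2(w),\dots$ are $n+m$ pairwise disjoint nonempty sets (bipartiteness plus $\dist(u,w)$ odd), each of which the visited set must meet. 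You instead prove the purely structural statement that \emph{every} closed set in $L_{n,m}$ with $n,m$ even has at least $n+m$ vertices --- uniqueness of farthest vertices forces a corner into any closed set, hence size at least $n+m-1$; equality forces one vertex per anti-diagonal, drags in an antipodal pair plus a third corner via the central anti-diagonal, and dies on a parity count of $2x_k-k$ --- and then you invoke Theorem~\ref{closedthm}. Your version buys two things: it sidesteps the paper's unproved claim that the Explorer can force all four corners, and it yields the stronger conclusion that no closed set of size $n+m-1$ exists at all (so the bound also follows from the minimal-closed-set characterization of $f_d$). The paper's version is shorter once one grants the four-corner claim. One presentational nit: your sentence ``hence every closed set has at least $n+m$ vertices'' needs the observation that the corner-containment step uses only closedness (not the hypothesis $|U|=n+m-1$), so that the bound $|U|\ge n+m-1$ holds for all closed $U$ before you rule out equality; your argument does establish this, but you should state the order of quantifiers explicitly.
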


In a few small cases when $n$ and $m$ are both even we know the lower bound holds with equality. For instance, $L_{2,2} \cong C_4$ and so we know $f_d(C_4, v) = 4$. It is also simple to show that $f_d (L_{4,4},v) = 8$ if $v$ is along one of the outer sides of the lattice. The Explorer can force the token to at least 8 distinct vertices and that the top and bottom rows form a closed set as do the left and right-most columns. However, we do not know the exact answer in general. 


\begin{proof}

Regardless of the size of the lattice and starting vertex Corollary \ref{ecc} provides the lower bound since given any starting vertex the explorer can call the eccentricity of that vertex to force the token to a corner and $n+m-2$ is the eccentricity of any corner vertex. When playing on $L_{n,m}$, all vertices that will be visited can be predetermined by the Director regardless of the Explorer's choices. Director will have two strategies based on the parity of $n$ and $m$. 

We will first focus on the case when one of $n$ and $m$ are odd.  Note that in this case there are at most 2 vertices of minimum eccentricity; we will call these central vertices. Given any starting vertex $v$ let $a$ be a corner vertex furthest from $v$ and let $b$ be the corner vertex antipodal to $a$. Let $A$ be a path of length $n+m-1$ which passes through $v$ and the central vertices and has endpoints $a$ and $b$. See Figure~\ref{FIGevenstrat} for two examples of such a path (the central vertex or vertices are labeled with $c$).


\begin{figure}[h!]
\begin{center}
        \begin{tikzpicture}
            \foreach \x in {0,...,4} {
                \draw [black, thick] (\x, 0)--(\x, 4);
                \draw [black, thick] (0, \x)--(4, \x);
            }
            
            \foreach \x in {0,...,4} {
                \foreach \y in {0,...,4} {
                    \draw [black, fill=white] (\x, \y) circle [radius=0.25] node () {};
                }
            }
            
                \draw [red, fill=lightgray!50] (1, 0) circle [radius=0.25] node () {$v$};
                \draw [red, fill=lightgray!50] (2, 0) circle [radius=0.25] node () {};
                \draw [red, fill=lightgray!50] (2, 1) circle [radius=0.25] node () {};
                \draw [red, fill=lightgray!50] (2, 2) circle [radius=0.25] node () {$c$};
                \draw [red, fill=lightgray!50] (2, 3) circle [radius=0.25] node () {};
                \draw [red, fill=lightgray!50] (2, 4) circle [radius=0.25] node () {};
                \draw [red, fill=lightgray!50] (3, 4) circle [radius=0.25] node () {};
                \draw [red, fill=lightgray!50] (4, 4) circle [radius=0.25] node () {$a$};
                \draw [red, fill=lightgray!50, thick] (0, 0) circle [radius=0.25] node () {$b$};
            
            \foreach \x in {0,...,4} {
                \draw [black, thick] (8, \x)--(11, \x);
            }
            \foreach \x in {0,...,3} {
                \draw [black, thick] (8+\x, 0)--(8+\x, 4);
            }

            \foreach \x in {8,...,11} {
                \foreach \y in {0,...,4} {
                    \draw [black, fill=white] (\x, \y) circle [radius=0.25] node () {};
                }
            }
            
                \draw [red, fill=lightgray!50] (8, 4) circle [radius=0.25] node () {$a$};
                \draw [red, fill=lightgray!50] (9, 4) circle [radius=0.25] node () {};
                \draw [red, fill=lightgray!50] (9, 3) circle [radius=0.25] node () {};
                \draw [red, fill=lightgray!50] (9, 2) circle [radius=0.25] node () {$c$};
                \draw [red, fill=lightgray!50] (10, 2) circle [radius=0.25] node () {$c$};
                \draw [red, fill=lightgray!50] (10, 1) circle [radius=0.25] node () {$v$};
                \draw [red, fill=lightgray!50] (11, 1) circle [radius=0.25] node () {};
                \draw [red, fill=lightgray!50, thick] (11, 0) circle [radius=0.25] node () {$b$};

\end{tikzpicture}
\caption{An example of a path $A$ on the lattices $L_{5,5}$ and $L_{5,4}$}\label{FIGevenstrat}
\end{center}
\end{figure}
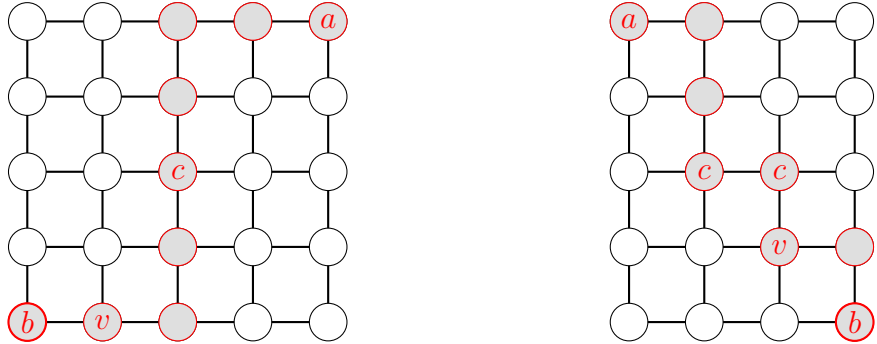

It is not too hard to check that such a path will always be a closed set proving that $f_d(G,v)\le n+m-1$.

We now consider the case when both $n$ and $m$ are even. The key difference is that now $L_{n,m}$ has four central vertices making up a central square. This change allows the Explorer to force the token to visit all four corners, while in the previous case the Director could always ensure only two corners are visited. 

Here we can slightly improve the $n+m-1$ lower bound implied by Corollary \ref{ecc}. Given a particular vertex $v$ let $N_d(v)$ be all the vertices of distance $d$ from $v$. Let $u$ and $w$ be two non-antipodal corners in $L_{n,m}$. Note that the sets $N_0(u), N_2(u),\ldots, N_{n+m}(u), N_0(w), N_2(w), \ldots,$ $N_{n+m}(w)$ are all disjoint (we are only looking at vertices of even distance from either $u$ or $w$). Since the Explorer can ensure all corner vertices are visited they can also ensure at least one vertex from each of these $n+m$ disjoint sets are visited implying $f_d(L_{n,m},v) \ge n+m$ for any starting vertex $v$.

In this case our upper bound comes from noting that for any given starting vertex $v = (x,y)$ the set 
\[A = \{(i,1): 1 \le i \le m\} \cup \{(i,n): 1 \le i \le m\} \cup \{(x,i): 1 \le i \le m\}\]
is closed. See Figure~\ref{FIGlatub} for an example of such a set. Note that this set clearly has size $2m+n-2$ and contains $v$ giving $f_d(L_{n,m},v) \le 2m+n-2$.

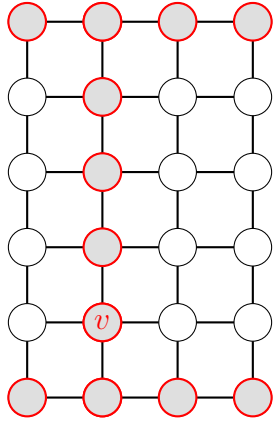
\begin{figure}[h!]
 \begin{center}
        \begin{tikzpicture}
            \foreach \x in {0,...,5} {
                \draw [black, thick] (0, \x)--(3, \x);
            }
             \foreach \x in {0,...,3} {
                \draw [black, thick] (\x, 0)--(\x, 5);

            }
            \foreach \x in {0,...,3} {
                \foreach \y in {0,...,5} {
                    \draw [black, fill=white] (\x, \y) circle [radius=0.25] node () {};
                }
            }

            
            \foreach \x in {0,...,3} {
                    \draw [red, fill=lightgray!50, thick] (\x, 5) circle [radius=0.25] node () {};
                    \draw [red, fill=lightgray!50, thick] (\x, 0) circle [radius=0.25] node () {};
            }
            \foreach \y in {0,...,5} {
                    \draw [red, fill=lightgray!50, thick] (1, \y) circle [radius=0.25] node () {};
            }
            \draw [red, fill=lightgray!50, thick] (1, 1) circle [radius=0.25] node () {$v$};

        \end{tikzpicture}
    \end{center}
  \caption{An example of the set $A$ on the lattice $L_{6,4}$}\label{FIGlatub}
    \end{figure}
We can do slightly better when $n=m$ and the starting vertex is in a particular position. In this case, for instance, the set 
$$B = \{(x,1): 0 \le x \le n\} \cup \{(x,n): 0 \le x \le n\} \cup \{(n-1, x): 3\le x \le n-2\}$$
is closed and of size $3n-4$. Thus if $v \in B$ we have $f_d(L_{n,n},v) \le 3n-4$. We show an example of the set $B$ for the lattice $L_{6,6}$ in Figure~\ref{FIGoddstrat}.
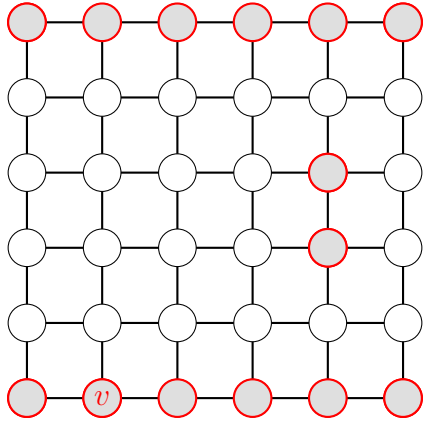
\begin{figure}[h!]
 \begin{center}
        \begin{tikzpicture}
            \foreach \x in {0,...,5} {
                \draw [black, thick] (\x, 0)--(\x, 5);
                \draw [black, thick] (0, \x)--(5, \x);
            }
            \foreach \x in {0,...,5} {
                \foreach \y in {0,...,5} {
                    \draw [black, fill=white] (\x, \y) circle [radius=0.25] node () {};
                }
            }

                \draw [red, fill=lightgray!50, thick] (0, 0) circle [radius=0.25] node () {};
                \draw [red, fill=lightgray!50, thick] (0, 5) circle [radius=0.25] node () {};
                \draw [red, fill=lightgray!50, thick] (5, 0) circle [radius=0.25] node () {};
                \draw [red, fill=lightgray!50, thick] (5, 5) circle [radius=0.25] node () {};

            
            \foreach \y in {0,...,5} {
                    \draw [red, fill=lightgray!50, thick] (\y, 0) circle [radius=0.25] node () {};
                    \draw [red, fill=lightgray!50, thick] (\y, 5) circle [radius=0.25] node () {};
            }
            \foreach \x in {2,3} {
                    \draw [red, fill=lightgray!50, thick] (4,\x) circle [radius=0.25] node () {};
                    \draw [red, fill=lightgray!50, thick] (4, \x) circle [radius=0.25] node () {};
            }
            \draw [red, fill=lightgray!50, thick] (1, 0) circle [radius=0.25] node () {$v$};

        \end{tikzpicture}
    \end{center}
  \caption{An example of the set $B$ on the lattice $L_{6,6}$}\label{FIGoddstrat}
    \end{figure}

\end{proof}

\subsection{Trees}

In this section, we consider the Explorer-Director game played on a tree. We will give an equality bound for $f_d(G,v)$, for any starting vertex $v \in V(G)$ in any tree $G$.

First, we consider the following lemma to provide a lower bound and introduce the strategy we will use in the proof of the main result. In optimal play, the Director will always return to a visited vertex whenever possible. In the following strategy, we show how the Explorer can take advantage of this optimal strategy for the Director.

\begin{lemma}\label{l_tree}
Let $G$ be a tree. Then for all vertices $v$ of $G$, $f_d(G, v) \geq diam(G) + 1.$
\end{lemma}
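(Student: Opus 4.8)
The plan is to exhibit an explicit Explorer strategy that forces the token to traverse a diameter path of the tree, hence visiting at least $\diam(G)+1$ vertices. Fix a diametral path $P = (p_0, p_1, \ldots, p_D)$ in $G$ with $D = \diam(G)$, where $p_0$ and $p_D$ are leaves realizing the diameter. The key structural fact about trees I would use is that for any vertex $w$ and any leaf $\ell$, the distance $\dist(w,\ell)$ is determined by the unique path from $w$ to $\ell$; in particular, since $p_0$ and $p_D$ are the two ends of a longest path, for every vertex $w \in V(G)$ we have $\dist(w,p_0) + \dist(w,p_D) \geq D$, with equality exactly when $w$ lies on $P$. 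Moreover $\max(\dist(w,p_0),\dist(w,p_D)) \geq \lceil D/2 \rceil$ and one of these two is the eccentricity-type quantity the Explorer can exploit.

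The strategy I would have the Explorer use: repeatedly call the distance $\dist(\text{current token position}, p_0)$, i.e.\ ``aim for the endpoint $p_0$.'' In a tree, if the token is at $w$ and the Explorer calls $d = \dist(w,p_0)$, then the Director must move to \emph{some} vertex at distance exactly $d$ from $w$; I claim that among all such vertices, the ``closest to $p_0$'' progress is forced in the sense that the new position $w'$ satisfies $\dist(w',p_0) \le \dist(w,p_0)$ need not hold for an arbitrary choice, so a naive single-target strategy is not obviously enough. The cleaner approach is a two-phase argument: first, from the starting vertex $v$, the Explorer calls $\ecc(v)$ once. Since $v$ is in a tree, its eccentricity is realized by a leaf, and more importantly by one of the two diametral endpoints if $v \in P$; in general calling $\ecc(v)$ forces the token to a leaf that is an endpoint of a longest path \emph{through $v$}. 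After this first move the token sits at a leaf $a$ that is an endpoint of some diametral-through-$v$ path. Then from a leaf $a$, the Explorer calls $\ecc(a)$: in a tree the eccentricity of a leaf is realized by the other end of a longest path starting at $a$, and since $a$ already lies at the end of a path of length $\ge$ (something), one shows this forces the token to travel a path of length $\ge D$, and in a tree there is a \emph{unique} vertex at that distance along a fixed geodesic structure — actually the Director still has choices among different branches, so I need to be careful.

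The honest key lemma, which I expect to be the main obstacle, is: \emph{when the token is at a leaf $a$ and the Explorer calls $\ecc(a)$, the Director is forced to move to a vertex $b$ with $\dist(a,b) = \ecc(a)$, and the unique $a$–$b$ path has length $\ecc(a) \ge D$ whenever $a$ was chosen as above; moreover, since the previous move already traced the path from $v$ to $a$ and trees are acyclic, the new path from $a$ onward shares only the vertex $a$ with nothing problematic, so the total number of distinct visited vertices is at least $D+1$.} To make this rigorous I would argue directly: the Explorer's plan is (i) call $\ecc(v)$, landing at a leaf $a$ with $\dist(v,a)=\ecc(v)$; (ii) call $\ecc(a)$, landing at a vertex $b$ with $\dist(a,b)=\ecc(a) \ge \diam(G)$ (eccentricity of any leaf in a tree equals the length of the longest path from it, and the longest such over all leaves is the diameter; one checks $\ecc(a) \ge D$ because $a$ is an endpoint of a longest path through $v$, and any path from $a$ can be extended... in fact simply $\ecc(a) \le D$ always and we only need $\ge D$, which follows if $a$ is a diametral endpoint). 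Since the $a$–$b$ geodesic in a tree is a path on $\dist(a,b)+1 \ge D+1$ vertices and the token traverses exactly that path during step (ii) (in a tree, forcing a move of distance $\dist(a,b)$ from $a$ means the token must land on the unique such vertex reachable — wait, not unique — but the token visits \emph{a} geodesic, which has $\ge D+1$ vertices all distinct), we get $f_d(G,v) \ge D+1$. I would clean this up by noting that whatever vertex $b$ the Director picks, the visited vertices include the entire $a$–$b$ path (the token moves along edges), and $|V(a\text{–}b\text{ path})| = \dist(a,b)+1 \ge \ecc(a)+1 \ge \diam(G)+1$, using that $a$ is a leaf so $\ecc(a) = \diam(G)$ when $a$ is a diametral endpoint and $\ecc(v)$ forces the token to such an endpoint (or to an endpoint of a path through $v$ of length $\ecc(v) \ge \lceil D/2\rceil$, from which a second-leaf call then reaches distance $\ge D$). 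The subtlety to nail down is precisely that two-step reachability of distance $\ge D$, i.e.\ that $\ecc(v)$ followed by $\ecc(\text{resulting leaf})$ always forces a path of $\ge D+1$ vertices; I would prove it via the tree inequality $\dist(w,p_0)+\dist(w,p_D)\ge D$ applied to $w=v$, concluding $\ecc(v)\ge \lceil D/2\rceil$ and that the forced leaf $a$ satisfies $\dist(a,p_{D})\ge D$ or $\dist(a,p_0)\ge D$, so step (ii) with the Explorer calling that distance (which is $\le \ecc(a)$) finishes the argument.
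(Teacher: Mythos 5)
There is a genuine gap, and it comes from a misreading of the game's rules rather than from the tree theory. Your argument culminates in the claim that when the token sits at a diametral endpoint $a$ and the Explorer calls $\ecc(a)=\diam(G)$, ``the visited vertices include the entire $a$--$b$ path (the token moves along edges).'' In this game the token does not traverse a path: the Director simply relocates it to \emph{some single vertex} at the called distance, and only the landing positions count as visited. (This is why, for instance, the paper can have $f_d(L_{n,m},v)=n+m-1$ on a lattice with $nm$ vertices; if intermediate vertices counted, every long call would sweep up whole geodesics.) So after your two moves --- call $\ecc(v)$ to land at a leaf $a$, then call $\ecc(a)$ to land at $b$ --- only the three vertices $v,a,b$ have been visited, and your proof establishes $f_d(G,v)\ge 3$, not $\ge \diam(G)+1$.

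The correct continuation of your (good) opening observations is iterative rather than one-shot. You correctly note that in a tree the farthest vertex from any vertex is an endpoint of a diametral path, so the Explorer can, at any moment, force the token onto such an endpoint by calling the eccentricity of the current position; and any diametral endpoint $a$ has $\ecc(a)=\diam(G)$, hence admits vertices at every distance $1,\dots,\diam(G)$, which are pairwise distinct. The point is then that the game cannot end while some distance $i\in\{1,\dots,\diam(G)\}$ has no visited vertex at distance $i$ from the current diametral endpoint: the Explorer calls that $i$, forces a new vertex, returns the token to a diametral endpoint, and repeats. This forces at least $\diam(G)+1$ distinct visited vertices, and is exactly the paper's argument (it is also an instance of Corollary~\ref{ecc} applied to the set of diametral endpoints). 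Your two-phase setup is the right first half of that proof; you need to replace the ``token traverses the geodesic'' step with this repeated call-and-return argument.
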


\begin{proof}

Let the diameter of the tree be $d$. Consider a set of vertices $A$ such that for each vertex in $a \in A$ there exists some vertex $v$ such that $\dist(a,v)=$ $i$ for each $i$ in $\{1,...,d\}$. The set $A$ must contain at least two vertices: the two endpoints of a path along the diameter.  Every vertex in $A$ must be a leaf. 

Whenever the Director occupies a vertex in $A$, there exist at least $d$ vertices with distinct distances, thus the Explorer can force the Director to visit at least one vertex of each distance. Whenever the Director occupies a vertex outside of $A$, the Explorer can force the Director back into $A$, and the game-play continues in this manner until $d+1$ vertices have been visited. 
There are $d+1$ vertices along the diameter path.

\end{proof}


Throughout this section, we will use the strategy of moving from a set of vertices to a new vertex, and then returning to the  initial set. In this way, the Explorer can force the Director into new vertices. We say that $\mathcal{P}$ is the set of all paths $P$ in $G$ that have length $\diam(G)$, and use this notation in the rest of the section.

\begin{remark}\label{r_Director_diam}
Above, we have shown that the Explorer can force the director to visit all the vertices on a specified diameter path. In an optimal strategy, the Director will always remain on a particular path $P \in \mathcal{P}$. Otherwise, the set of visited vertices will included the final diameter path that the Explorer has forced, as well as the initial vertices the Director had visited.
\end{remark}



Considering Remark~\ref{r_Director_diam}, we now consider the main result of the section. When playing the original geodesic distance version of the Explorer-Director game, we have found the precisely the number of vertices that must be visited in optimal gameplay. 

\begin{theorem}\label{tree}
Given a tree $G$ with set of diameter paths $\mathcal{P}$. For a starting vertex $v \in V(T)$, let $u \in V(P)$ for some $P \in \mathcal{P}$ , and we let $\ell =\min_{u \in P \in \mathcal{P}} \dist(u,v)$. Then,
\[f_d(G,v) = \diam(G)+\ell+1\]
\end{theorem}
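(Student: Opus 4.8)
The plan is to prove the two inequalities $f_d(G,v) \ge \diam(G)+\ell+1$ and $f_d(G,v) \le \diam(G)+\ell+1$ separately, in both cases identifying the extremal structure explicitly. Write $d = \diam(G)$ and let $P^\* \in \mathcal{P}$ be a diameter path achieving the minimum $\ell = \dist(u,v)$, with $u \in V(P^\*)$ the closest point of any diameter path to $v$. The natural candidate for the set of visited vertices is $C = V(P^\*) \cup Q$, where $Q$ is the unique path in the tree from $v$ to $u$; this set has exactly $(d+1) + \ell$ vertices since $Q$ meets $P^\*$ only at $u$ and has $\ell$ edges. So the heart of the argument is: (1) the Explorer can force every vertex of some such $C$ to be visited, and (2) the Director can confine the token to this particular $C$.

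For the lower bound, I would run the Explorer's strategy in two phases. In Phase 1, the Explorer calls distances to drag the token from $v$ toward a diameter path and then along it, exactly as in Lemma~\ref{l_tree} and Remark~\ref{r_Director_diam}: whenever the token sits on a leaf-endpoint of a diameter path the Explorer can realize every distance $1,\dots,d$, forcing a visit to a new vertex at each distance, and whenever the token is off the current path the Explorer can force it back. This forces $d+1$ vertices along some $P \in \mathcal{P}$ to be visited. The extra $+\ell$ comes from the beginning of the game: the token starts at $v$, and the first move the Explorer makes to reach any diameter path must traverse the tree-path from $v$ to its nearest point on $\bigcup_{P \in \mathcal{P}} V(P)$; since that nearest point is $u$ and $\dist(v,u) = \ell$, the $\ell$ intermediate vertices on $Q$ (all distinct from $V(P^\*)$, and distinct from $v$) get visited en route and can never be "un-visited." The cleanest way to make this rigorous is to argue that any vertex visited before the token first lands on a diameter path, together with the $d+1$ vertices the Explorer subsequently forces, must be $\ge d + \ell + 1$ in number — using that the path from $v$ to the diameter-path system has length at least $\ell$ and the Director, trying to minimize, will route the token along exactly that geodesic. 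One should double-check the edge case $\ell = 0$ (i.e. $v$ already lies on a diameter path), where this reduces to Lemma~\ref{l_tree}.

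For the upper bound I would exhibit a Director strategy keeping the token inside $C = V(P^\*) \cup Q$ and show $C$ is closed in the sense of Section~\ref{closed}, so that by the discussion following Theorem~\ref{closedthm} (or Theorem~1.? ) we get $f_d(G,v) \le |C| = d+\ell+1$. Closedness is the key computation: for a vertex $c \in C$ and an arbitrary target distance $r$, I need some $x \in C$ with $\dist(c,x) = r$ whenever such an $x$ exists in $G$ at all. For $c$ on the diameter path $P^\*$, the point is that $P^\*$ already realizes all distances from $0$ up to $\max(\dist(c,a),\dist(c,b))$ where $a,b$ are its endpoints, and because $P^\*$ is a diameter path this range is large enough to cover every distance attained from $c$ in $G$ (any vertex farther from $c$ than both endpoints of $P^\*$ would contradict $P^\*$ having length $d = \diam(G)$, via the standard tree fact that the farthest vertex from any vertex is an endpoint of some diameter path); the parity/connectivity of distances along a path then gives every intermediate value. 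For $c \in Q$, distances toward $P^\*$ pass through $u$ and then fan out along $P^\*$, and distances "backward" along $Q$ toward $v$ cover the small remaining values — here I'd use that $u$ is the *closest* diameter-path vertex to everything hanging off $Q$, so no vertex off $C$ near $v$ is farther from $c$ than the endpoints of $P^\*$ are.

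The main obstacle I anticipate is the closedness verification for vertices $c \in Q \setminus \{u\}$: one must rule out that some target distance $r$ is achieved in $G$ by a vertex outside $C$ but by no vertex inside $C$. This needs the full force of $\ell$ being the *minimum* over all diameter paths (not just a property of $P^\*$) and the tree structure — essentially, that detours off $Q$ or off $P^\*$ cannot reach farther than the diameter endpoints and cannot create a distance value "skipped" by $C$. I would handle it by fixing $c \in Q$, letting $z$ be any vertex of $G$, and tracing the geodesic $c \to z$: it first runs along $Q$ (possibly to $u$), and I'd compare $\dist(c,z)$ with $\dist(c,a)$ and $\dist(c,b)$, showing the former never exceeds the larger of the latter two, then invoking that along $P^\* \cup Q$ every integer distance in $[0,\max]$ from $c$ is realized. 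The Director strategy itself is then immediate from closedness (the Director, whenever the token is in $C$, always has a legal response staying in $C$), but it is worth stating explicitly to match Remark~\ref{r_Director_diam}.
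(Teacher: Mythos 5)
Your upper bound is sound and, if anything, cleaner than the paper's: you verify that $C = V(P^*)\cup Q$ is a closed set containing $v$ and invoke the Section~\ref{closed} machinery, whereas the paper argues directly about the Director's behaviour (stay on $P$ once reached, move toward $P$ otherwise). The closedness check you worry about for $c\in Q\setminus\{u\}$ is in fact routine: walking from $c$ through $u$ to either endpoint $a$ or $b$ of $P^*$ realizes every distance $0,1,\dots,\max(\dist(c,a),\dist(c,b))$ inside $C$, and the standard tree fact that $\ecc(c)=\max(\dist(c,a),\dist(c,b))$ for any diametral pair $a,b$ shows no larger distance is ever attainable in $G$, so no value is ``skipped.''

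The lower bound, however, has a genuine gap. You claim the $\ell$ intermediate vertices of $Q$ ``get visited en route'' when the token first travels to a diameter path. That is not how this game works: when the Explorer calls a distance $k$, the Director places the token at a single vertex at distance $k$, and the interior vertices of the connecting path are \emph{not} visited. In particular, if the Explorer ever calls a distance as large as $\ell$, the Director can jump the token straight to $u$ and then confine play to $P^*$, yielding only $d+2$ visited vertices --- strictly less than $d+\ell+1$ when $\ell\ge 2$. Your proposed rigorization (``any vertex visited before the token first lands on a diameter path, together with the $d+1$ vertices\dots must be $\ge d+\ell+1$'') is therefore false as a statement about arbitrary Explorer play; the bound only holds for a carefully chosen strategy. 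The missing idea, which is the actual content of the paper's lower-bound argument, is that the Explorer must call \emph{small} distances to creep along the geodesic from $v$ to $u$: roughly, force one new vertex of $Q$ every other round by alternating a short forcing move with a return move, and argue that any Director deviation off $Q$ (or any refusal to return) only increases the count, so that all $\ell$ vertices of $Q\setminus\{u\}$ beyond those of the diameter path are eventually forced. Without some such incremental forcing argument the $+\ell$ term is not justified.
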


\begin{proof}

We begin with the proof of the upper bound. Let $G$ be a tree with set of diameter paths $\mathcal{P}$, and specified started vertex $v$. 

First we note, that whenever the Director occupies a vertex, $v$ on some diameter $P$ of the tree $G$, they will always remain on vertices in $P$, since there exists a vertex on $P$ of every distance up to $\ecc(v)$, thus remaining on $P$ will minimize the number of vertices visited. Whenever the Director occupies a vertex that is not on $P$ for some diameter path $P \in \mathcal{P}$, they will choose to move towards some such $P$. Once they have entered some path $P$, they will never leave, and thus the most number of vertices that can be visited are the number of vertices in $P$, which is $\diam(G)+1$, along with the initial vertices required to reach $P$, which is at most $\ell$ as defined in the statement of the theorem, thus giving the upper bound.

Consider now the optimal strategy of the Explorer. Let $u \in V(P), P \in \mathcal{P}$ be the particular vertex closest to $v$ that is on a diameter path. For each vertex along the geodesic between $u,v$, we claim the explorer can force a visit to each vertex. Beginning on $v$, the Explorer uses a similar strategy as used in Lemma~\ref{l_tree}. For each $i \in \{1,..., \dist(u,v)\}$, the Director will move towards $u$ to minimize the number of vertices visited. The Explorer however cannot continue to call $1$ each turn, since the Director would then move back to the previously visited vertex $v$. The Director will never move to a vertex off this path in order to minimize the number of vertices visited. If the Director chooses a vertex not on the geodesic between $u$ and $v$, the Explorer can continue this strategy, and force the Director to visit even more vertices. Thus, the Explorer must return to $v$ after each new vertex along the path of $\dist(u,v)$, and can force the Director to visit one new vertex every other round. This completes the lower bound.

\end{proof}

\section{Nonadaptative strategies}\label{nonadaptive}

In this section, we consider a case in which the Explorer has a strategy which is independent of the Director's choices. We require the following preliminary definition.



\begin{definition}
We define a nonadaptative strategy $S$ as a finite sequence of natural numbers, and its score denoted $f_d^S(G,v)$ as the minimum number of visited vertices if the explorer plays according to $S$ starting at $v$. We say a nonadaptive strategy, $S$, is optimal if $f_d^S(G,v) = f_d(G,v)$.
\end{definition}

There does not necessarily exist an optimal nonadaptative strategy for trees when the starting vertex is not in a longest path. For example, consider the following tree, where $C$ is the center of a sufficiently long path.
\begin{center}
    \begin{tikzpicture}
    \draw [black, thick] (0,0) -- (4, 0);
    \draw [black, fill=white] (0, 0) circle [radius=0.25] node (a) {a};
    \draw [black, fill=white] (1, 0) circle [radius=0.25] node (b) {};
    \draw [black, fill=white] (2, 0) circle [radius=0.25] node (c) {};
    \draw [black, fill=white] (3, 0) circle [radius=0.25] node (d) {};
    
    \draw [black, thick] (4,0) -- (5,1);
    \draw [black, thick] (4,0) -- (5,-1);
    \draw [black, fill=white] (4, 0) circle [radius=0.25] node (L) {C};
    
    \draw [black, thick] (5,1) -- (9, 1);
    \draw [black, fill=white] (5, 1) circle [radius=0.25] node (L) {};
    \draw [black, fill=white] (6, 1) circle [radius=0.25] node (L) {};
    \draw [black, fill=white] (7, 1) circle [radius=0.25] node (L) {};
    \draw [black, fill=white] (8, 1) circle [radius=0.25] node (L) {};
    \draw [black, fill=white] (9, 1) circle [radius=0.25] node (L) {};
    
    \draw [black, thick] (5,-1) -- (9,-1);
    \draw [black, fill=white] (5,-1) circle [radius=0.25] node (L) {};
    \draw [black, fill=white] (6,-1) circle [radius=0.25] node (L) {};
    \draw [black, fill=white] (7,-1) circle [radius=0.25] node (L) {};
    \draw [black, fill=white] (8,-1) circle [radius=0.25] node (L) {};
    \draw [black, fill=white] (9,-1) circle [radius=0.25] node (L) {};
    \end{tikzpicture}
\end{center}
Note that by Theorem \ref{tree} we know $f_d(G,a) = |V(G)|$ for such a graph. However, in any nonadaptive strategy if the explorer ever plays a distance other than 1, then the director can ensure that some vertex in $G$ is never visited. Clearly, if the director only ever plays distance 1 then the explorer can restrict the token to only $a$ and its neighbor. Thus there is no optimal nonadaptive strategy for this graph.

 We will show below that for a particular class of trees there is an optimal nonadaptive strategy when we start at a vertex in a longest path.

Recall that $N_k(v)$ denotes all vertices of distance $k$ from $v$ and any vertex of minimal eccentricity is a central or center vertex. We will use the following standard proposition on trees see, for example, \cite{Knuth}.

\begin{proposition}
A tree has at most two centers.
\end{proposition}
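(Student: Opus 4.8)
The plan is to prove the classical fact that a tree has at most two centers by analyzing how the eccentricity function behaves along paths, and in particular by using the standard ``leaf-pruning'' argument. First I would set up the key observation: in a tree, for any vertex $v$, the vertices realizing the eccentricity $\ecc(v)$ (i.e. the vertices at maximum distance from $v$) are always leaves. Indeed, if $w$ is at distance $\ecc(v)$ from $v$ and $w$ is not a leaf, it has a neighbor $w'$ not on the unique $v$--$w$ path, and then $\dist(v,w') = \ecc(v)+1$, a contradiction.

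From here, the cleanest route is the pruning argument. Let $G'$ be the tree obtained from $G$ by deleting all leaves simultaneously. The point is that $G'$ is nonempty whenever $G$ has at least three vertices (a tree with $\ge 3$ vertices has an internal vertex), and that the center of $G$ equals the center of $G'$: since every eccentricity-realizing vertex of any $v$ is a leaf, deleting all leaves decreases $\ecc(v)$ by exactly $1$ for every remaining vertex $v$, so the set of eccentricity-minimizers is unchanged. Iterating, we eventually reach a tree with one or two vertices, whose center is itself the whole vertex set (a single vertex, or a single edge); since the center is preserved at every step, the original tree has at most two centers, and if it has two they are adjacent.

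Alternatively — and perhaps more self-contained for this paper — I would argue directly by convexity of eccentricity along a path. Suppose for contradiction that $c_1, c_2, c_3$ are three distinct centers with common radius $r$. One can show that along any path in a tree the eccentricity function is ``strictly convex'' in the sense that if $y$ lies strictly between $x$ and $z$ on the path then $\ecc(y) < \max(\ecc(x), \ecc(z))$; more precisely, moving one step along an edge changes the eccentricity by exactly $\pm 1$ and it cannot stay constant for two consecutive steps in the same direction without the function being forced to decrease then increase, which would let us find a vertex of eccentricity smaller than $r$. Applying this to the path through the three centers contradicts minimality of $r$. The main obstacle is making this ``step changes eccentricity by $\pm 1$'' claim precise and ruling out two consecutive $+1$ or $0$ steps cleanly; the pruning argument sidesteps this by handling all leaves at once, so I would lead with that and relegate the convexity discussion to a remark. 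Since the paper already treats this as a ``standard proposition'' and cites \cite{Knuth}, a short proof via leaf-pruning (two or three sentences) is the appropriate level of detail.
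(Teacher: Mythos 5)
The paper does not actually prove this proposition---it is stated as a standard fact with a citation to \cite{Knuth}---so there is no in-paper argument to compare against; your leaf-pruning proof is the standard one and is essentially correct. Two small points. First, to conclude that the center of $G$ equals the center of the pruned tree $G'$, it is not quite enough that $\ecc_{G'}(v)=\ecc_G(v)-1$ for all $v\in V(G')$ (which you establish correctly, since all eccentricity-realizing vertices are leaves and distances between surviving vertices are unchanged); you also need that no leaf of $G$ is a center of $G$ when $|V(G)|\geq 3$. This follows in one line from $\ecc_G(u)=\ecc_G(u')+1$ for a leaf $u$ with neighbor $u'$ in such a tree, but it should be said. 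Second, your alternative ``convexity'' sketch rests on the claim that eccentricity changes by exactly $\pm 1$ across each edge, and that claim is false: in the path on four vertices the two middle vertices are adjacent and both have eccentricity $2$ (indeed, in any bicentral tree the two centers are adjacent with equal eccentricity). You flagged this step as the obstacle and correctly chose to lead with the pruning argument instead; I would simply drop the convexity remark rather than try to repair it, since the correct statement (eccentricity strictly decreases by $1$ at each step toward the center and the center induces a single vertex or a single edge) is essentially what the pruning argument proves anyway.
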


For the following results, we focus on trees with only one center, however, the case for bi-centered trees is analagous.

\begin{theorem}
Let $T$ be a tree with center $c$. If every component in $T\setminus \{c\}$ contains at most one endpoint of a longest path of $T$ and the starting vertex, $v$ is contained in some longest path then there is an optimal nonadaptive strategy. 
\end{theorem}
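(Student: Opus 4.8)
The plan is to show that the nonadaptive strategy which plays the same sequence of distances that the (adaptive) Explorer uses in the proof of Theorem~\ref{tree} actually succeeds against \emph{every} response of the Director, given the structural hypothesis on $T$. Recall from Theorem~\ref{tree} that the Explorer's adaptive strategy, starting from $v$, first drives the token along the geodesic to the nearest vertex $u$ of a longest path, and then, working from $u$, forces the token onto a full longest path $P \ni u$; the score is $\diam(T)+\ell+1$. So I would define the nonadaptive strategy $S$ to be exactly the distance sequence this adaptive play would call if the Director always moved ``inward'' toward $u$ and then stayed on $P$ --- namely $\ell$ ones to walk from $v$ to $u$, followed by the alternating ``$1$, then $k$'' pattern that sweeps out a longest path rooted at $u$. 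The claim is that $f_d^S(T,v) = \diam(T)+\ell+1$.

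The main work is to verify that $S$ cannot be exploited by a deviating Director. Here is where the hypotheses enter. First I would observe that since $v$ lies on a longest path, every distance called in $S$ is at most $\ecc$ of the current ``intended'' vertex, so the Director is never forced off the structure we are tracking; the only danger is the Director voluntarily straying, which (as Remark~\ref{r_Director_diam} notes) can only \emph{increase} the visited count, so it cannot help the Director \emph{unless} the total then exceeds $\diam(T)+\ell+1$ --- that is exactly what we must rule out. The key point is: whenever the token is at a vertex $w$ and the Explorer calls distance $d$, the set $N_d(w)$ of legal destinations all lie in the same component $C$ of $T\setminus\{c\}$ once $d < \dist(w,c)$, and all lie ``past'' $c$ once $d \ge \dist(w,c)$; because each component of $T\setminus\{c\}$ contains at most one endpoint of a longest path, the Director's choices within a component are confined to a single ``branch'' whose far end is a longest-path endpoint, and the vertices reachable by the remaining calls of $S$ from any such $w$ form a set of size at most the remaining length of the longest path. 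I would formalize this with an invariant: after $t$ moves of $S$, the number of visited vertices is at most $\ell + 1 + (\text{number of distinct distances called so far that were }\ge 1\text{ and created a new vertex})$, and the total number of ``new-vertex-creating'' calls in $S$ is $\diam(T)$.

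Concretely, the induction hypothesis I would carry is: at each point in the play of $S$, either (a) the token is on the geodesic $[v,u]$ and exactly as many vertices have been visited as the adaptive analysis predicts, or (b) the token has ``committed'' to some longest path $P'$ (not necessarily the original $P$, since the Director chose which branch past $c$ to enter), every subsequent call of $S$ lands on a vertex of $P'$ or a previously visited vertex, and the running count is bounded by $\ell + 1 + |P' \cap (\text{swept so far})|$. The transition from (a) to (b) happens at the first call of $S$ of size $> 1$; the single-center hypothesis guarantees that once the token is at distance $0$ or $1$ from $c$ and a large distance is called, the destination is pinned down to lie on a unique longest path through $c$, and the uniqueness of longest-path endpoints per component means the Director cannot ``switch branches'' midway to force fresh vertices on both sides. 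I expect the genuinely delicate step to be this branch-commitment argument: showing that the fixed distance sequence $S$, which was calibrated to sweep one longest path of length $\diam(T)$, still sweeps \emph{whichever} longest path the Director steers into without wasting or duplicating calls --- this uses that all longest paths through $c$ have the same two arm-lengths, so the ``$1,k,1,k,\dots$'' pattern is agnostic to which arm is chosen. Finally, for the bi-centered case I would remark (as the paper does) that the argument is identical after replacing $c$ by the central edge and noting the two arms off each center still have matching lengths.
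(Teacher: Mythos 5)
There is a genuine gap here, and the central idea --- take the distance sequence that the adaptive strategy of Theorem~\ref{tree} would produce against a cooperating Director and replay it nonadaptively --- does not survive contact with a deviating Director. Two concrete problems. First, the sequence you actually write down is broken: a fixed alternating pattern ``$1,k,1,k,\dots$'' never sweeps a longest path. If the token sits at an endpoint $v_0$ of a path $v_0,\dots,v_L$ and you repeat $(1,\,L-1)$, the token cycles through $v_0,v_1,v_{L-1},v_L$ forever; to reach the vertices at distance $i$ from the endpoints you must \emph{increase} the small distance as the game progresses. Second, even the corrected ``cooperative transcript'' $(1,L-1,2,L-2,\dots)$ fails, because the Director can alternate which arm's endpoint the token returns to, so that the call of distance $i$ is issued from a different endpoint each time and only about half of each arm is ever forced. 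This is exactly why the paper's strategy $S=(s_1,\dots,s_{R-1},R)$ with $s_i=(i,\,L-i,\,i,\,L-i)$ plays each probe \emph{twice} per block (and why the paper remarks afterwards that only about half the steps reach new vertices): the redundancy is what makes the strategy Director-proof. The verification that each block forces two new vertices --- via the fact that the one-endpoint-per-component hypothesis puts any two longest-path endpoints at distance exactly $2R$, so their distance-$i$ balls are disjoint for $i<R$, and that calling $L-i$ then pins the token back onto some longest-path endpoint --- is precisely the ``genuinely delicate step'' you defer rather than prove, and it is the entire content of the theorem.

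A secondary issue is your prefix of ``$\ell$ ones to walk from $v$ to $u$.'' Under the hypothesis of this theorem $v$ already lies on a longest path, so $\ell=0$; and if $\ell$ were positive the prefix would fail outright --- the example immediately preceding the theorem in the paper shows that a nonadaptive run of $1$'s lets the Director bounce between $v$ and a single neighbour, and indeed that no optimal nonadaptive strategy exists for such starting vertices. The correct reduction, used in the paper, goes the other way: if $v$ is on a longest path but is not an endpoint, the Explorer's first call is $\ecc(v)$, which in a tree forces the token to an endpoint of a longest path in a single move, after which the block strategy applies.
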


\begin{proof}

Let us assume, without loss of generality, that the starting vertex is an endpoint, otherwise the explorer starts by calling the eccentricity of the starting vertex. Let $L > 3$ be the length of the longest path and $R$ the radius, that is the eccentricity of the center. The $L \le 3$ case is simple to check. Note that by Theorem \ref{tree} $f_d(G,v) = L+1 = 2R +1$ for $v$ being an end point.

We define the subsequence $s_i = (i, L-i, i, L-i)$ and we define our strategy as the concatenated subsequences $S = (s_1, s_2,\ldots, s_{R-1}, R)$. We claim $f_d^S(G,v)= L+1$.

We will show that after $s_1$ is called 4 vertices will have been explored, after each subsequent $s_i$ is called two new vertices are explored, and the token will visit the center on the last move. 

If the token is at an endpoint of a longest path then for any $1\le i \le R-1$ we know that after the Explorer calls $i$ followed by $L-i$ the token will again be at a different endpoint of a longest path. Thus after $s_1$ is called we know the token must have visited 4 distinct vertices. Furthermore, we know that after calling $s_{i}$ the token will have visited at least two vertices that were not visited in the first $s_{i-1}$ subsequences. This is because after $i$ is called the first time in the $s_i$ subsequence the token must move to a vertex of distance $i$ from an endpoint and after $L-i$ is called the token must return to an endpoint of a longest path. Therefore, before the last move we have explored $4 + 2(R-2) = 2R$ vertices and the token is at an endpoint. Finally, the explorer calls $R$ the token is forced to the center, and the number of visited vertices is now $2R+1 = f_d(G,v)$ as desired.
\end{proof}

The above shows that for a variety of trees, the Explorer can use a completely nonadaptive strategy to visit the optimal number of vertices.  However, the strategy provided requires the Explorer to revisit the same leaves many times, so essentially only half the steps take us to new vertices.  In the case of paths, however, we show that these revisits are unnecessary.

\begin{theorem}\label{nonadaptive-paths}
If $G$ is a path, then the Explorer has a nonadaptive strategy that visits each vertex exactly once, regardless of the Director's choices.
\end{theorem}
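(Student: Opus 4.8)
Label the path $G$ with vertices $1, 2, \dots, n$ in order, and suppose the token starts at vertex $v$. The plan is to give an explicit nonadaptive sequence of distances so that, no matter how the Director resolves each move, the token traces out a Hamiltonian path in the \emph{time order} of visits — i.e. every distance called lands the token on a brand-new vertex. The key structural fact about a path is that from a vertex at position $p$, a call of distance $d$ forces the token to $p-d$ or $p+d$, whichever lie in $[1,n]$; in particular, when $p-d < 1$ the move is \emph{forced} to $p+d$, and symmetrically when $p+d>n$. So the Explorer's whole task is to choose distances that either are forced (because only one endpoint of the "jump" is legal) or, when both options are legal, lead to a new vertex regardless of the Director's pick.

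**The strategy.** First I would have the Explorer call $\ecc(v)$, which on a path forces the token to whichever endpoint ($1$ or $n$) is farther; without loss of generality assume it is forced to vertex $1$. Now the token is at an endpoint, and from an endpoint \emph{every} call is forced (from vertex $1$, calling $d$ must go to $1+d$). So from this point on the Explorer has complete control: the remaining problem is to choose a sequence $d_1, d_2, \dots$ of distances so that the partial sums, interpreted as a walk on $\{1, \dots, n\}$ that "bounces" — actually, since each step is forced only while we are at an endpoint, I need to be more careful. The cleaner formulation: I want a sequence of vertices $1 = w_0, w_1, w_2, \dots, w_{n-1}$ that is a permutation of $\{1,\dots,n\}$, such that for each $k$, the distance $|w_k - w_{k-1}|$, when called from $w_{k-1}$, is \emph{forced} to land at $w_k$ — meaning the "other" candidate $2w_{k-1} - w_k$ is outside $[1,n]$, or else is already among $\{w_0, \dots, w_{k-1}\}$... but a nonadaptive strategy cannot branch on what's visited, so I actually need the other candidate to be \emph{outside the graph} every time. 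The vertex sequence that achieves this is the classic "in from both ends" ordering: $1, n, 2, n-1, 3, n-2, \dots$. Check: from $1$, call $n-1$, forced to $n$ (other candidate $2-n < 1$). From $n$, call $n-2$, forced to $2$ (other candidate $2n-2 > n$). From $2$, call $n-3$, forced to $n-1$. In general each successive jump is slightly shorter than the previous one and always overshoots the graph on the side we came from, so it is forced. This gives distances $n-1, n-2, n-3, \dots, 2, 1$, a strictly decreasing run, and the induced vertex sequence is exactly the zig-zag permutation, hitting all $n$ vertices, each exactly once.

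**Assembling the proof.** So the full nonadaptive strategy is: call $\ecc(v)$ (forcing an endpoint), then call $n-1, n-2, \dots, 1$. I would prove by a one-line induction that after calling $n-1, n-2, \dots, n-k$ the token sits at the $k$-th vertex of the zig-zag order and exactly $k+1$ vertices (counting the endpoint) have been visited, each once; the inductive step is just the "overshoot forces the move" observation above. A small point to handle: if $v$ is itself an endpoint, the initial call $\ecc(v) = n-1$ is already the first term of the decreasing run, so the strategy is simply $n-1, n-2, \dots, 1$ and visits all $n$ vertices — no double-counting. If $v$ is interior and equidistant-ish, $\ecc(v)$ still forces a unique endpoint since on a path the two endpoints are at distinct distances from any non-central vertex unless $n$ is odd and $v$ is dead center, in which case both endpoints are at distance $\ecc(v)$ and the Director picks one — but either choice is an endpoint, so the argument proceeds identically. (When $v$ is central and $n$ even there is no single center vertex issue; the generic case already covers it.)

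**Main obstacle.** The only real subtlety is the bookkeeping at the very start: reconciling the "call $\ecc(v)$ first" move with the decreasing run so that no vertex is counted twice and the count comes out to exactly $n$. Everything after reaching an endpoint is mechanical because from an endpoint every move is deterministic, so the Director has no freedom at all; the entire content is choosing the zig-zag target order and verifying each jump overshoots. I expect the write-up to be short, with the induction on the decreasing run of distances as the technical core.
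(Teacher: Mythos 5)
There is a genuine gap here, and it sinks the central mechanism of your argument. You claim that along the zig-zag $1, n, 2, n-1, 3, n-2, \ldots$ every jump is forced because the other candidate ``overshoots.'' That is only true for roughly the first two thirds of the jumps. A call of distance $d$ from position $p$ is forced precisely when $d > \min(p-1,\, n-p)$, and as the zig-zag closes in on the middle of the path this threshold grows toward $(n-1)/2$ while your declared distances shrink toward $1$. Concretely, take $n=7$: from vertex $1$ you call $6,5,4,3$ and the token is indeed forced through $7,2,6,3$; but now from vertex $3$ you call $2$, and the Director may move to vertex $1$ (already visited) instead of vertex $5$. The same failure occurs for every $n \ge 4$ (e.g.\ for $n=4$, from vertex $2$ the final call of $1$ can be answered with vertex $1$). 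So a strictly decreasing run of distances cannot be forcing all the way to the middle; this is exactly why the paper's strategy keeps every declared distance near $n/2$ (alternating $k$ and $k-1$ on a $2k$-vertex path, and $k+1$ and $k$ on a $(2k+1)$-vertex path), since jumps of that size overshoot on one side from every vertex except the exact center of an odd path --- the single obstruction that forces the paper's two-phase argument in the odd case.

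Second, even if the zig-zag were forced, your opening move violates the ``exactly once'' requirement: when the starting vertex $v$ is interior, calling $\ecc(v)$ sends the token to an endpoint, and the subsequent zig-zag is a permutation of all $n$ vertices, so the token lands on $v$ a second time. You flag this as bookkeeping to be reconciled, but it cannot be patched by deleting the offending step of the run, since the remaining jumps would then have the wrong lengths and would no longer be forced. Handling an arbitrary interior start without a revisit is the real content of the theorem, and it is where the paper spends most of its proof.
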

\begin{proof}
We provide a construction showing that for any starting vertex, the Explorer can efficiently explore the path by repeatedly declaring numbers that are so large that the Director never has any choice for where the token should go.  We show two cases for the parity of the length of the path. If the path has an odd number of vertices, then the Director would necessarily have a choice whenever the token is on the center vertex. 

We first address the case that $G$ has an even number of vertices, labeled $v_1, v_2, \ldots, v_{2k}$. The strategy is to declare distance $1$ when the token is at $v_1$, and afterwards to alternate between declaring distances $k$ and $k-1$, as shown in Figure \ref{figure:even_paths}.
\begin{figure}[h!]
    \centering
    \includegraphics[width=.9\textwidth]{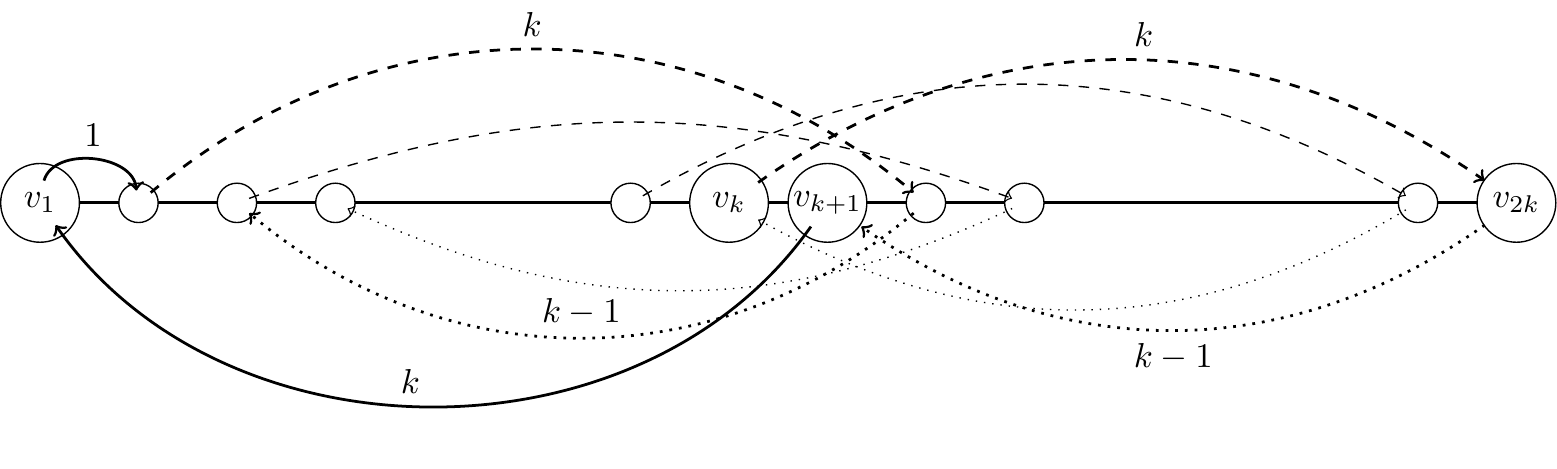}
    \caption{Nonadaptive strategy in even paths, alternating steps of size $k$ (dashed, drawn above the path) and $k-1$ (dotted, drawn below)}
    \label{figure:even_paths}
\end{figure}

Each time the distances $k$ or $k-1$ are declared, the Director always has exactly one choice for where to move the token, since the distance $k$ is declared at each of the two center vertices.  
Moreover, as can be seen, the parity of the path conveniently makes it so that if the token starts at $v_1$, then by the time it arrives at $v_k$, the only unvisited vertices will be $v_{2k}$ and $v_{k+1}$.  Thus, declaring distance $k$ followed by $k-1$ forces the token to each of these in turn.

When the path has an odd number of vertices, the unique center vertex prohibits us from being able to simply force the token as before, and our strategy is necessarily more involved.  For this, we number the vertices $v_0, v_1, \ldots, v_{2k}$, with $v_k$ being the center vertex.  We begin at a vertex $v_x$ with $0 \leq x < k$ and we exhibit a forcing strategy that visits each vertex exactly once and ending on $v_{k}$.  If---on the other hand---the vertex instead starts at $v_k$, notice that the Explorer could start by saying any distance they like and then simply follow our strategy, omitting the final step of moving to $v_k$.  Our strategy is described in two phases, as shown in Figure \ref{figure:odd_paths}.
\begin{figure}[h]
    \centering
    \includegraphics[width=.9\textwidth]{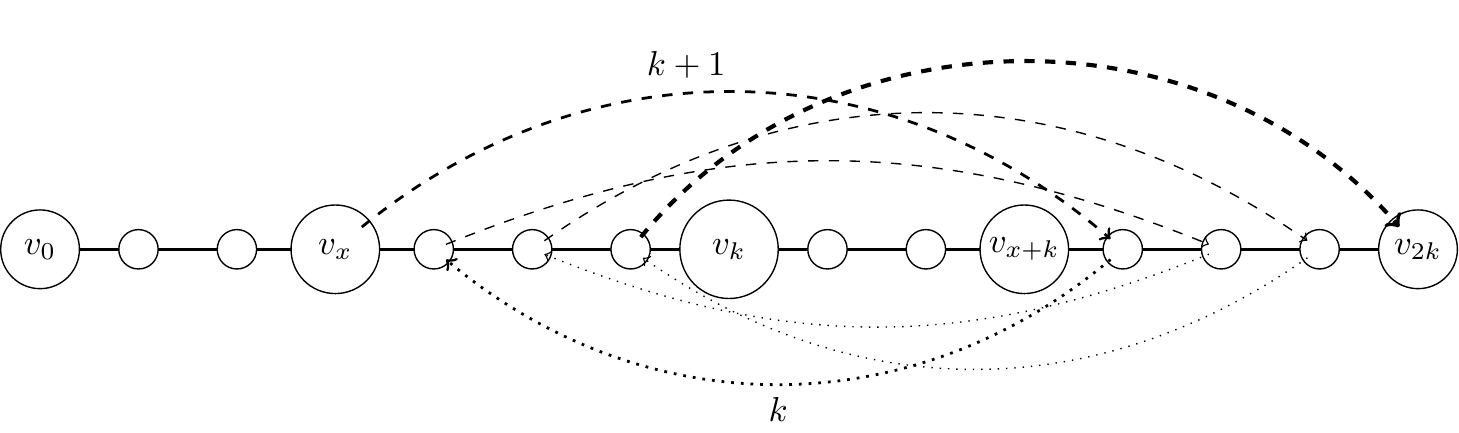}
    \includegraphics[width=.9\textwidth]{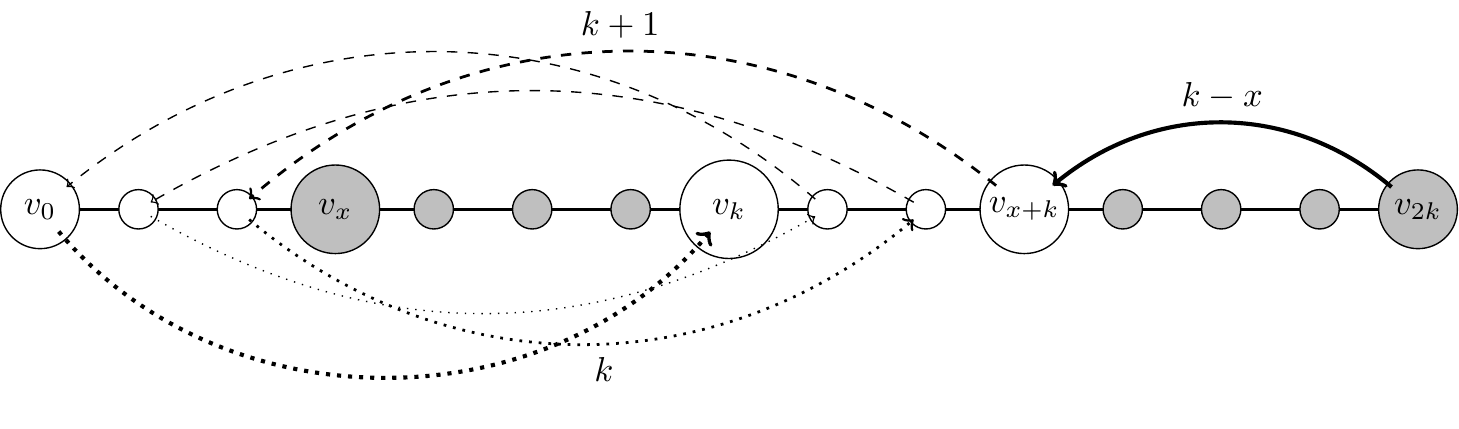}
    \caption{Nonadaptive strategy for odd paths, shown in two phases.  Essentially alternating steps of size $k+1$ (dashed, drawn above the path) and $k$ (dotted, drawn below)}
    \label{figure:odd_paths}
\end{figure}

In the first phase, the token begins at $v_x$, and the Explorer alternates between declaring $k+1$ and $k$ until the token arrives at $v_{2k}$.  At this point, the token will have seen every vertex $v_t$ where $x \leq t < k$ as well as vertices where $x+k < t \leq 2k$, and the token is at $v_{2k}$ (as shaded gray in Figure \ref{figure:odd_paths}).

The second phase of the strategy begins with the token at $v_{2k}$, and the Explorer declares $k-x$ to move the token to $v_{x+k}$.  After this, the Explorer simply alternates between declaring $k+1$ and $k$ (mirroring the path in phase one) until they arrive at $v_0$.  Finally, declaring $k$ moves the token to $v_k$, which is the last vertex that needed to be visited.
\end{proof}

\section{Conclusion and Further Research}

We conclude this paper with a discussion on further research. We next define a variant using distance norms for the director to move, such as allowing the director to move along any path, denoted $f_p(G,v)$ or even trail, denoted $f_t(G,v)$ a given length. Notice that a walk would not be an interesting variant. Interestingly, the number of visited vertices when allowing the token to move along paths, $f_p(G,v)$ can be higher than $f_d(G,v)$ as the Explorer can now call significantly higher values.

Another variant we propose is that of a weighted graph. It is shown in the arXiv Companion that every graph has a weighting such that the set of vertices visited at the end of the game is the entire graph. Further work in this variant could include determining the complexity, adding constraints on the vertex weights, or classifying the sets of weights that allow all vertices to be visited in terms of other graph parameters.


One can also consider the number of steps required to visit $f_d(G,v)$ vertices for various graph families, as was done in Theorem \ref{nonadaptive-paths} as well as \cite{HPW, N10, CLST, N14}. This then furthers the research into determining the complexity of the problem. Similarly, further work into what graph families allow for non-adaptive strategies is of interest. Finally, there are clearly many more interesting graph families to consider such as hypercubes or even random graphs.  

\subsection*{Acknowledgements}

The authors would like to thank the organizers and participants of the Polymath Research Experience for Undergraduates where this work was conducted.

\bibliographystyle{abbrv}
\bibliography{EDbib}

\end{document}